\documentclass[12pt]{amsart}
\usepackage[margin=1.2in]{geometry}
\usepackage{amsthm, amssymb, amsmath, mathrsfs, bm, enumerate, tikz, stmaryrd, epic, eepic, cite, cleveref}
\newtheorem{PROP}{Proposition}
\newtheorem{LMA}{Lemma}
\newtheorem*{DEF}{Definition}

\newtheorem{THM}{Theorem}
\newtheorem*{EXM}{Example}

\newtheorem{main}{Theorem}

\usepackage{enumitem}

\date{February 2023}
\begin{document}
\title{Geometric Conditions for Twisted $O$-sphericity}
\author{Arieh Zimmerman}
\address{Department of Mathematics and Computer Science, Weizmann Institute of Science, 
Rehovot, IL.}
\email{arieh.arbor@gmail.com}

\begin{abstract}The geometric condition defining a spherical variety for a reductive algebraic group was generalized in \cite{AG21}, with applications to representation theory. We twist by a character to generalize this definition, and show its equivalence to a property of group actions that generalizes Theorem B of \cite{AG21}. We also present an example to demonstrate the necessity of this generalization.
\end{abstract}

\maketitle

\section{The Twisted Condition}
Let $G$ be an algebraic group over an algebraically closed field $F$. Recall that, for an algebraic $G$-action on a smooth variety $X$ defined over $F$, we get a map $\nu:\mathfrak{g}\rightarrow\Gamma(X)$ to global vector fields on $X$. This in turn gives a map $\mu:T^*X\rightarrow\mathfrak{g}^*$ defined by $\mu(s_{(p,v)})(g)=s_{(p,v)}(\nu(g))$. It is then a classical theorem (see \cref{moment} below) that there are finitely many orbits if and only if $\dim\mu^{-1}(0)\leq\dim X$.

Classically, there is a notion of $X$ being spherical, meaning a Borel subgroup $B$ of $G$ has finitely many orbits on $X$.  Recasting the notion with this theorem, sphericity means that the moment map coming from the action of all of $G$ satisfies an inequality: $\dim\mu^{-1}(\mathfrak{b}^\perp)\leq\dim X$, where $\mathfrak{b}$ is the Lie algebra of $B$.

In \cite{AG21}, the authors generalize this to larger parabolic subgroups $P\subseteq G$. This uses its Richardson orbit $O_P$, which, if it has Lie algebra $\mathfrak{p}$ with unipotent radictal $\mathfrak{u}$, is the unique orbit whose intersection with $\mathfrak{u}$ is open and dense.

\begin{DEF}Such a $G$-variety $X$ is called $\overline{O_P}$-spherical if $\dim\mu^{-1}(O\cap\mathfrak{p}^\perp)\leq\dim X+\frac{1}{2}\dim O$ for all $O\subseteq\overline{O_P}$. When $X$ is a torsor $G/H$, we call the closed subgroup $H$ an $\overline{O_P}$-spherical subgroup.\end{DEF}

Note that, in this case where $X$ is this torsor, $\mu^{-1}(O)$ is isomorphic to $G\times_H(\mathfrak{h}^\perp\cap O)$, and the inequality can be rewritten $\dim(O\cap\mathfrak{h}^\perp)\leq\frac{1}{2}\dim O$. 

\begin{THM}[\cite{AG21}, Theorem B]\label{untwisted}$P$ has finitely many orbits on $X$ if and only if $X$ is $\overline{O_P}-spherical$.\end{THM}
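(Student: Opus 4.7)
The plan is to reduce to the classical moment-map finiteness criterion cited above by working on the induced $G$-variety $Y := G \times_P X$. A direct check establishes a natural bijection between $G$-orbits on $Y$ and $P$-orbits on $X$ (two points $[g_1,x_1]$ and $[g_2,x_2]$ are $G$-conjugate iff $x_1$ and $x_2$ are $P$-conjugate), so finiteness of $P$-orbits on $X$ is equivalent to $\dim \mu_Y^{-1}(0) \leq \dim Y = \dim G/P + \dim X$.

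The central computation is the description of $\mu_Y^{-1}(0)$. Since $P$ acts freely on the right on $G$, the space $Y$ arises as a symplectic reduction of $G \times X$; this identifies $T^*Y \cong G \times_P (T^*X \times \mathfrak{p}^\perp)$ and presents the $G$-moment map as $[g, \eta, \zeta] \mapsto \mathrm{Ad}^*(g)\bigl(\mu(\eta) + \zeta\bigr)$. Setting this to zero forces $\mu(\eta) = -\zeta \in \mathfrak{p}^\perp$, so $\mu_Y^{-1}(0) \cong G \times_P \mu^{-1}(\mathfrak{p}^\perp)$ has total dimension $\dim G/P + \dim \mu^{-1}(\mathfrak{p}^\perp)$.

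Next I would stratify $\mathfrak{p}^\perp$ by its intersections with coadjoint orbits, using that $\mathfrak{p}^\perp$ is identified with the nilradical $\mathfrak{u}_P$ whose $G$-saturation is exactly $\overline{O_P}$. This gives $\mu^{-1}(\mathfrak{p}^\perp) = \bigsqcup_{O \subseteq \overline{O_P}} \mu^{-1}(O \cap \mathfrak{p}^\perp)$, reducing the aggregate inequality to a stratum-by-stratum analysis. For each orbit $O$, $G$-equivariance makes the fibers of $\mu$ constant-dimensional along $O$, yielding the identity $\dim \mu^{-1}(O \cap \mathfrak{p}^\perp) = \dim \mu^{-1}(O) - \dim O + \dim(O \cap \mathfrak{p}^\perp)$, which lets us translate freely between the three dimensions involved.

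The hard part is calibrating the precise $\tfrac12 \dim O$ slack appearing in the definition. This leans on the Richardson geometry of $\overline{O_P}$: $\mathfrak{p}^\perp$ is Lagrangian in the Richardson orbit itself, so $\dim(O_P \cap \mathfrak{p}^\perp) = \tfrac12\dim O_P$, while on smaller orbits $\dim(O \cap \mathfrak{p}^\perp) \leq \tfrac12\dim O$. Combined with the coisotropy lower bound $\dim \mu^{-1}(O) \geq \dim X + \tfrac12 \dim O$ for moment-map preimages of coadjoint orbits (when non-empty), these two Richardson-type inequalities squeeze each stratum's contribution into exactly the form stated in the definition. I expect this orbit-by-orbit calibration, rather than the reduction $P \leadsto G$ via $Y$, to be the delicate technical step.
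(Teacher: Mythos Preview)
This theorem is quoted from \cite{AG21} and not proved in the present paper; what the paper does prove is the twisted generalization (\cref{result}) in the special case $X=G/H$. Your induced space $Y=G\times_P X$ is $G$-equivariantly isomorphic to the product $X\times G/P$ that the paper works with (via $[g,x]\mapsto(gx,gP)$), so at the level of ambient $G$-spaces the two setups coincide, and your identification of $\mu_Y^{-1}(0)$ with $G\times_P\mu^{-1}(\mathfrak p^\perp)$ matches the paper's decomposition of $\mu^{-1}(0)$ into the pieces $T_O$. For the direction ``$\overline{O_P}$-spherical $\Rightarrow$ finitely many $P$-orbits'' your dimension count is essentially the argument of Proposition~1 specialized to $\chi=0$: combine the sphericity bound on $\dim\mu^{-1}(O)$ with the isotropy inequality $\dim(O\cap\mathfrak p^\perp)\le\tfrac12\dim O$.

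The murky point is the converse. As you have written it, the coisotropy lower bound $\dim\mu^{-1}(O)\ge\dim X+\tfrac12\dim O$ points the wrong way: from $\dim\mu^{-1}(O\cap\mathfrak p^\perp)\le\dim X$ and your fiber identity you need $\dim(O\cap\mathfrak p^\perp)\ge\tfrac12\dim O$ to conclude $\dim\mu^{-1}(O)\le\dim X+\tfrac12\dim O$, whereas isotropy only gives $\le$. The repair is to apply your coisotropy principle not to $T^*X$ but to $T^*(G/P)$: there $\mu_{G/P}^{-1}(O)\cong G\times_P(O\cap\mathfrak p^\perp)$, so the bound reads $\dim(O\cap\mathfrak p^\perp)\ge\tfrac12\dim O$, forcing equality for every $O\subseteq\overline{O_P}$. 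Once that equality is in hand, both implications drop out of your identity immediately. The paper handles this direction differently: Proposition~2 avoids any general coisotropy statement and instead shows by an explicit computation with the symplectic form of \cref{symplectic} that each orbitwise piece $T_C\subset T_{O,\lambda}$ is isotropic, whence the graph $\Gamma_{O,\lambda}$ is isotropic and the dimension bound follows. Your route is shorter once the equality $\dim(O\cap\mathfrak p^\perp)=\tfrac12\dim O$ is established, but you should isolate and justify it rather than leave it buried in the ``squeeze''.
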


This theorem provides an accessible way to determine $\overline{O_P}$-sphericity, the importance of which is demonstrated by the main result in \cite{AG21}. It continues, for the case $F=\mathbb{C}$, a string of results on finite multiplicities resulting from sphericity, and partially generalizes \cite{KO}, where smooth functions on $G/H$ were shown to have bounded multiplicities as a $G(\mathbb{R})$-representation if $H$ is spherical.

\begin{THM}[\cite{AG21}, Theorem C]Assume $G,H$ are defined over $\mathbb{R}$, and let $H$ be $\overline{O_P}$-spherical. For any finitely generated smooth admissible Frechet representation $\pi$ of moderate growth with wavefront set contained in $\overline{O_P}$, we have$$\dim\textrm{Hom}_{G(\mathbb{R})}(\mathcal{S}(G/H(\mathbb{R})),\pi)<\infty.$$Here $\mathcal{S}(G/H(\mathbb{R}))$ is the space of Schwartz functions on $\mathbb{R}$-points of $G/H$.\end{THM}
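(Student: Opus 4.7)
My plan is to reduce the Hom-space to a space of $H(\mathbb{R})$-equivariant distributions on $G(\mathbb{R})$, bound the wavefront set of any such distribution using both the hypothesis on $\textrm{WF}(\pi)$ and the equivariance, and conclude finite-dimensionality from the resulting half-dimensional characteristic variety using the $\overline{O_P}$-sphericity of $H$.

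The first step uses Frobenius reciprocity in the Casselman--Wallach category: a continuous $G(\mathbb{R})$-equivariant map $\mathcal{S}(G/H(\mathbb{R})) \to \pi$ is determined by its value on a delta-function at the base point, giving an isomorphism
$$\textrm{Hom}_{G(\mathbb{R})}(\mathcal{S}(G/H(\mathbb{R})),\pi)\;\cong\;(\pi^{-\infty})^{H(\mathbb{R}),\chi},$$
where $\chi$ is the modular character. For each continuous linear functional $\lambda$ on $\pi^\infty$, the matrix coefficient $g\mapsto\langle\lambda,\pi(g)v\rangle$ of such a distribution vector $v$ is a scalar distribution on $G(\mathbb{R})$ which is right $H(\mathbb{R})$-equivariant under $\chi$.

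The second step is a wavefront set calculation. The bound $\textrm{WF}(\pi)\subseteq\overline{O_P}$, combined with the standard propagation bound for distribution vectors, places $\textrm{WF}(v)$ inside $G(\mathbb{R})\times\overline{O_P}$ after trivializing $T^*G$ by right translations. Independently, right $H$-equivariance implies the matrix coefficients descend to $G(\mathbb{R})/H(\mathbb{R})$, so their wavefront sets lie in the conormal bundle of the fibers, i.e.\ in covectors vanishing on $\mathfrak{h}$. Intersecting gives a characteristic variety inside $\mu^{-1}(\overline{O_P}\cap\mathfrak{h}^\perp)$. The $\overline{O_P}$-sphericity hypothesis, in the form $\dim(O\cap\mathfrak{h}^\perp)\leq\tfrac12\dim O$ for each orbit $O\subseteq\overline{O_P}$ noted after the definition, then forces this variety to have dimension at most $\dim G/H$, precisely the holonomicity bound in $T^*(G/H(\mathbb{R}))$. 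Invoking a Kashiwara--Schapira-type result that the space of distributions on a real manifold with holonomic characteristic variety is finite-dimensional concludes the proof.

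The step I expect to be the main obstacle is the second one: making the vector-valued wavefront calculus rigorous in the Fréchet Casselman--Wallach setting, and correctly matching the analytic wavefront bound coming from $\textrm{WF}(\pi)$ with the complex-algebraic moment-map condition used to define $\overline{O_P}$-sphericity. In particular, care is needed to verify that real points of complex orbits behave well under the algebraic dimension inequality, and that both propagation of singularities and the equivariance-forces-$\mathfrak{h}^\perp$ principle apply to $\pi^\infty$-valued distributions exactly as they do in the scalar case.
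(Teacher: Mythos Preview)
The paper does not contain a proof of this statement. Theorem~C is quoted verbatim from \cite{AG21} purely as motivation: the author cites it to explain why $\overline{O_P}$-sphericity is interesting, and then moves on to the twisted geometric criterion (Theorem~1), which is the only result actually proved here. There is therefore nothing in the paper to compare your proposal against.

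For what it is worth, your sketch is broadly in the spirit of the argument in \cite{AG21} itself: reduce to $H$-equivariant distributional vectors, bound their singular support by combining the wavefront hypothesis on $\pi$ with the equivariance constraint, and use the sphericity inequality to force a holonomic-type dimension bound. The delicate points you flag (vector-valued wavefront calculus in the Casselman--Wallach setting, and the passage between real singular support and the complex-algebraic orbit inequality) are indeed where the substance lies, and they are handled in \cite{AG21} via the machinery of \cite{holonomicity}. But none of that is reproduced in the present paper, so your proposal is being compared to an absent proof.
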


The goal of this note is a generalization of \cref{untwisted}, with the hope that such a finite multiplicity result can hold with weaker hypotheses. After experimentation yielded an example where finite multiplicities are likely to hold (see \cref{example} below), but $\overline{O_P}$-sphericity did not, the author's advisor took the essential characteristics of this example and proposed the following definition. Let $H\subseteq G$ be a connected closed subgroup of an algebraic group over $\mathbb{C}$, $P\subseteq G$ be parabolic, and $O_P\subset\mathfrak{g}^*$ be its Richardson orbit. We will also choose a character $\chi\in\mathfrak{h}^*$. We are going to pull back this character under the restriction $r:\mathfrak{g}^*\rightarrow\mathfrak{h}^*$.
\begin{DEF}The pair $(H,\chi)$ will be called $O$-spherical if$$\dim(O\cap r^{-1}(\chi))\leq\dim O/2.$$We call it $\overline{O_P}$-spherical if it is $O$-spherical for every $O\subseteq\overline{O_P}$.\end{DEF}
Note that for $\chi=0$, by Lemma 2.2.2 of \cite{AG21}, this reduces to the $O_P$-sphericity of $H$. In that case $H$ has finitely many orbits on $G/P$. Along these lines, our goal is to extend the methods of \cite{AG21} to prove the following generalization of \cref{untwisted}:
\begin{main}\label{result}Let $Y=\{x\in G/P:\chi|_{\textrm{stab}_{\mathfrak{h}}(x)}=1\}$. Then $H$ has finitely many orbits on $Y$ if and only if $(H,\chi)$ is $\overline{O_P}$-spherical.\end{main}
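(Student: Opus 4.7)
The plan is to follow the architecture of the proof of \cref{untwisted} in \cite{AG21}, inserting the character $\chi$ at the natural places. There are two logically independent ingredients. First, a twisted analogue of \cref{moment}: for any smooth $H$-variety $X$ and any $\mathrm{Ad}^*$-invariant character $\chi\in\mathfrak{h}^*$, writing $Y_X := \{x\in X : \chi|_{\textrm{stab}_{\mathfrak{h}}(x)} = 0\}$, one has finitely many $H$-orbits on $Y_X$ if and only if $\dim\mu_H^{-1}(\chi)\le \dim X$, where $\mu_H := r\circ \mu$ is the $H$-moment map on $T^*X$. Second, a stratified dimension count for $\mu_H^{-1}(\chi)\subseteq T^*(G/P)$ that converts this bound into the orbit-by-orbit sphericity condition.

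For the twisted moment map criterion, the key observation is that the codifferential $\nu_x^*:T^*_xX\to\mathfrak{h}^*$ has image $\textrm{stab}_{\mathfrak{h}}(x)^\perp$, so the projection $\mu_H^{-1}(\chi)\to X$ has image exactly $Y_X$, and its fiber over $x\in Y_X$ is an affine translate of the conormal to $H\cdot x$ in $T^*_xX$, of dimension $\dim X - \dim(H\cdot x)$. A component-wise count yields $\dim\mu_H^{-1}(\chi)\ge \dim X$, with equality iff each irreducible component of $Y_X$ contains a dense $H$-orbit; a standard noetherian iteration on $H$-stable closed subvarieties (each of which inherits the same dimension bound on its portion of $\mu_H^{-1}(\chi)$) then upgrades ``dense orbit in each component'' to ``finitely many $H$-orbits on $Y_X$.''

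Applying the criterion with $X=G/P$ and stratifying $\mu_H^{-1}(\chi)$ by the ambient $G$-moment map $\mu:T^*(G/P)\to\mathfrak{g}^*$, the preimage $\mu^{-1}(O)=G\times_P(\mathfrak{p}^\perp\cap O)$ fibers over $O$ with fiber of dimension $\dim G/P + \dim(\mathfrak{p}^\perp\cap O) - \dim O$, so
\[
\dim\bigl[\mu_H^{-1}(\chi)\cap\mu^{-1}(O)\bigr] \;=\; \dim(O\cap r^{-1}(\chi)) + \dim G/P + \dim(\mathfrak{p}^\perp\cap O) - \dim O.
\]
Invoking the equidimensionality $\dim(\mathfrak{p}^\perp\cap O)=\tfrac12\dim O$ for every $O\subseteq \overline{O_P}$ (a variant of Lemma 2.2.2 of \cite{AG21}), the right-hand side simplifies to $\dim(O\cap r^{-1}(\chi)) + \dim G/P - \tfrac12\dim O$, and maximizing over $O$ shows that $\dim\mu_H^{-1}(\chi)\le \dim G/P$ is equivalent to $\dim(O\cap r^{-1}(\chi))\le \tfrac12\dim O$ for every $O\subseteq \overline{O_P}$ --- precisely the $\overline{O_P}$-sphericity of $(H,\chi)$. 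I expect the main technical hurdle to be the twisted moment map criterion rather than this dimension count, because $Y$ is not a priori smooth or irreducible and \cref{moment} does not apply to it directly; staying inside the smooth ambient $T^*(G/P)$ and exploiting the affine structure of the fibers of $\mu_H^{-1}(\chi)\to Y$ bypasses singularity, but making the noetherian descent rigorous will require care.
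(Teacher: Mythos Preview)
Your route is genuinely different from the paper's. The paper lifts the problem to the diagonal $G$-action on $S=G/H\times G/P$, applies the \emph{untwisted} \cref{moment} to the constructible $G$-stable set $W\subseteq S$, and then handles the two implications separately: for ``spherical $\Rightarrow$ finite orbits'' it compares $f^{-1}(W)\cap T_O$ with a twisted variant $T_{O,\lambda}$ fiberwise via \cref{Chevalley}; for ``finite orbits $\Rightarrow$ spherical'' it invokes the twisted symplectic manifold $G\times_H(\mathfrak{h}^\perp+\lambda)$ of \cref{symplectic} and shows directly that a graph $\Gamma_{O,\lambda}$ is isotropic there. Your plan instead stays on $T^*(G/P)$, proves a $\chi$-twisted version of \cref{moment} for the $H$-action (which is indeed straightforward, since the fibers of $\mu_H^{-1}(\chi)\to Y$ are affine translates of the usual conormals), and reduces everything to one stratified dimension count. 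That is shorter and avoids both the passage to $G/H\times G/P$ and the auxiliary symplectic structure.

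The catch is in the direction you regard as easy. Your formula
\[
\dim\bigl[\mu_H^{-1}(\chi)\cap\mu^{-1}(O)\bigr]=\dim(O\cap r^{-1}(\chi))+\dim G/P+\dim(\mathfrak{p}^\perp\cap O)-\dim O
\]
is fine, and for ``spherical $\Rightarrow$ finite orbits'' the isotropy bound $\dim(\mathfrak{p}^\perp\cap O)\le\tfrac12\dim O$ (the only fact the paper cites from \cite{CG}) suffices. But the converse requires the \emph{equality} $\dim(\mathfrak{p}^\perp\cap O)=\tfrac12\dim O$ for every $O\subseteq\overline{O_P}$: without it your chain of inequalities only yields $\dim(O\cap r^{-1}(\chi))\le\dim O-\dim(\mathfrak{p}^\perp\cap O)$, which is weaker than $O$-sphericity whenever the inequality is strict. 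That equality is exactly the statement that the partial Springer map $T^*(G/P)\to\overline{O_P}$ is \emph{strictly} semismall (every orbit is relevant), and this is not what the paper extracts from Lemma~2.2.2 of \cite{AG21}; the paper's own argument for this direction deliberately bypasses any such input by working inside $G\times_H(\mathfrak{h}^\perp+\lambda)\times O$ and proving isotropy by hand. So either you must supply an independent proof (or precise citation) of the Lagrangian property of $\mathfrak{p}^\perp\cap O$ for all $O\subseteq\overline{O_P}$, or adopt the paper's symplectic argument for that implication. Contrary to your expectation, the twisted moment criterion is the routine half; the orbitwise count is where the real input hides.
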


Using the theorem, we found examples of potential interest that satisfy twisted sphericity but require the twist.

\begin{EXM}\label{example}Define a pair$$GL_{6}(\mathbb{C})=:G\supseteq H:=\left\{\left(\begin{array}{ccc}B&X&Y\\0&B&Z\\0&0&B\end{array}\right)\Bigg|\ B=\left(\begin{array}{cc}b_1&b_2\\0&b_3\end{array}\right)\in GL_{2}\right\}$$acting on the almost-complete flag variety $G/P:=GL_6/P_{2,1,1,1,1}$. Then put $\chi(\log h)=X+Y$ for $h\in H$ written as above (sufficiently close to $\textrm{Id}$). Then $(H,\chi)$ is $\overline{O_P}$-spherical.\end{EXM}

That this example satisfies the hypotheses of the main theorem is carried out in \cite{thesis}.

\section{A Proof via Symplectic Manifolds}
We will need some conventions. Take some $y\in Y$, and let's say its preimage under the quotient $G\rightarrow G/P$ is the parabolic $Q$, conjugate to $P$. Since the algebra of the stabilizer of $Q$ in $H$ is $\mathfrak{q}\cap\mathfrak{h}$, the definition of $Y$ means that $\lambda\in(\mathfrak{q}\cap\mathfrak{h})^\perp=\mathfrak{h}^\perp+\mathfrak{q}^\perp$ for any $\lambda$ restricting to $\chi$ on $\mathfrak{h}$. When the choice of parabolic conjugate is clear, we will accordingly write $\lambda=\lambda'+\lambda''$.
\par Next we reformulate the condition that $H$ has finitely many orbits on $Y\subset G/P$. The $H$ orbits on $G/P$ are in bijection with the $G$ orbits on $G/H\times G/P$ by the map $G\cdot(g_1H,g_2P)\mapsto H\cdot g_1^{-1}g_2P$. To restrict to those orbits in the $H$-subspace $Y$ is to consider only the $G$-subspace defined as$$G/H\times G/P\supseteq W=\{(g_1H,g_2P)\mid\chi\mid_{\mathfrak{h}\cap g_1^{-1}g_2\mathfrak{p}g_2^{-1}g_1}\equiv0\}.$$So by this translation we are dealing with the finiteness of $G$-orbits in $W$. One advantage to using orbits under $G$ is the following generalization of a classical proposition:
\begin{LMA}\label{moment}For a smooth $G$-space $S$, let $f:T^*S\rightarrow S$ be the natural projection and $W\subseteq S$ be a locally closed $G$-subspace. Then $G$ has finitely many orbits on $W$ if and only if $\dim f^{-1}(W)\cap\mu_G^{-1}(0)\leq\dim S$.\end{LMA}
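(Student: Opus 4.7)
The plan is to mimic the classical argument for the un-restricted case via a stratification of $W$ by orbit dimension. First, for each $d\geq 0$ let $W_d:=\{p\in W\mid\dim(G\cdot p)=d\}$. Because $p\mapsto\dim(G\cdot p)$ is lower semicontinuous on $S$, each $W_d$ is a locally closed subset of $W$, and it is $G$-invariant since orbit dimension is constant on orbits. Only finitely many $W_d$ are nonempty.

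Second, I would compute $\dim\bigl(f^{-1}(W_d)\cap\mu_G^{-1}(0)\bigr)$ by analyzing the fibers of $f$. For any $p\in S$, the defining property of the moment map identifies the fiber $f^{-1}(p)\cap\mu_G^{-1}(0)$ with the annihilator $(T_p(G\cdot p))^\perp\subseteq T_p^*S$, of dimension $\dim S-\dim(G\cdot p)$. Since this fiber dimension is constant on $W_d$, a standard fiber-dimension computation gives
\[
\dim\bigl(f^{-1}(W_d)\cap\mu_G^{-1}(0)\bigr)=\dim W_d+(\dim S-d).
\]

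Third, assembling the strata yields
\[
\dim\bigl(f^{-1}(W)\cap\mu_G^{-1}(0)\bigr)=\max_{d:\,W_d\neq\emptyset}\bigl(\dim W_d+\dim S-d\bigr),
\]
and this is at most $\dim S$ if and only if $\dim W_d\leq d$ for every $d$ with $W_d\neq\emptyset$. It remains to identify this with finiteness of orbits: $W_d$ is $G$-invariant and every orbit inside it has dimension exactly $d$, so the inequality $\dim W_d\leq d$ forces $W_d$ to be a finite union of $d$-dimensional orbits, and conversely any such finite union has dimension at most $d$. Since only finitely many strata are nonempty, $W$ then has finitely many orbits overall.

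The main obstacle I anticipate is the bookkeeping required to justify the fiber-dimension formula when $W$ is only locally closed rather than smooth. Concretely, one must argue that $W_d$ is sufficiently well-behaved as a $G$-invariant constructible subscheme for the equality above to hold on every irreducible component, and one must handle the closure operations so that the displayed maximum genuinely computes the dimension of $f^{-1}(W)\cap\mu_G^{-1}(0)$ as an (intrinsically locally closed) subvariety of $T^*S$. Once these technicalities are pinned down, the remainder is a transparent repackaging of the proof of the classical moment-map criterion.
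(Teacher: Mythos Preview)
Your argument is correct and follows a somewhat different, more elementary path than the paper's. Both proofs rest on the identification of the fibre of $f^{-1}(W)\cap\mu_G^{-1}(0)$ over a point $p$ with the conormal space $(T_p(G\cdot p))^{\perp}$, but they diverge at the implication ``infinitely many orbits $\Rightarrow$ dimension $>\dim S$''. The paper does not stratify by orbit dimension; instead it cites a result of Brion (a finite stratification $W=U_1\sqcup\cdots\sqcup U_k$ into $G$-invariant pieces each admitting a geometric quotient) and then observes that if some $U_i$ carries infinitely many orbits, the union of their conormal bundles has dimension strictly greater than $\dim S$. Your stratification $W=\bigsqcup_d W_d$ avoids this external input: once you reduce to $\dim W_d\le d$, finiteness follows because every irreducible component of $W_d$ is $G$-invariant (for connected $G$), has dimension exactly $d$, and hence contains a unique dense $d$-dimensional orbit, while $W_d$, being locally closed in a variety, has only finitely many components. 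The technical worries you flag are harmless: $W_d$ is locally closed with finitely many irreducible components, the fibre dimension of $f$ restricted to $f^{-1}(W_d)\cap\mu_G^{-1}(0)$ is constant over each component, and the displayed maximum over $d$ does compute the dimension of the finite union. The trade-off is that your route is self-contained and slightly sharper, while the paper's route outsources the combinatorial finiteness step to a citable black box.
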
\begin{proof}Each orbit lying in $W$ has a conormal bundle that is irreducible and Lagrangian, hence has dimension $\frac{1}{2}\dim T^*S=\dim S$. The union of these is precisely $f^{-1}(W)\cap\mu_G^{-1}(0)$.\par If $G$ has finitely many orbits on $W$, then the union is finite, and its dimension is also $\dim S$.\par Assume instead that there are infinitely many orbits. By Proposition 1.26 in \cite{Br}, there is a stratification $W=U_1\sqcup\dots\sqcup U_k$ by $G$-invariant subsets admitting a geometric quotient (here $U_{i+1}$ is the set of stable points in $U_i$). Some $U_i$ has infinitely many orbits under $G$, so the dimension of $f^{-1}(U_i)\cap\mu_G^{-1}(0)$ is strictly larger than the dimension of the conormal bundle of a single orbit. This means $\dim f^{-1}(W)\cap\mu_G^{-1}(0)>\dim S$.\end{proof}
From here, one direction of our desired theorem can be accomplished with only the use of a lemma.
\begin{LMA}\label{Chevalley}For two surjective morphisms $f_i:X_i\rightarrow Y$ of algebraic sets with $\dim f_1^{-1}(y)=\dim f_2^{-1}(y)$ at every $y\in Y$, we get $\dim X_1=\dim X_2$.\begin{proof}See e.g.\ Lemma 2.1 in \cite{holonomicity}.\end{proof}\end{LMA}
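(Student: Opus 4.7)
The natural approach is to use Chevalley's upper semicontinuity of fiber dimension to stratify $Y$. For each $i$, the function $y\mapsto\dim f_i^{-1}(y)$ is upper semicontinuous on $Y$, so the level sets $Y_d^{(i)}:=\{y\in Y:\dim f_i^{-1}(y)=d\}$ form a locally closed stratification of $Y$. The hypothesis pinpointwise equality of fiber dimensions forces $Y_d^{(1)}=Y_d^{(2)}$ for every $d$; call the common piece $Y_d$.

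I would then prove the stratum-wise identity
\[
\dim f_i^{-1}(Y_d)\;=\;\dim Y_d+d,
\]
from which
\[
\dim X_i\;=\;\max_d\dim f_i^{-1}(Y_d)\;=\;\max_d\bigl(\dim Y_d+d\bigr)
\]
depends only on data shared by $f_1$ and $f_2$, yielding $\dim X_1=\dim X_2$. To establish the identity, restrict $f_i$ to $f_i^{-1}(Y_d)\to Y_d$; this is a surjective morphism all of whose fibers have dimension exactly $d$. Decomposing source and target into irreducible components and discarding the components of the source that fail to be dominant onto some target component, I would apply the classical fiber dimension theorem on each remaining piece. Since $d$ is simultaneously the maximum and minimum fiber dimension on the stratum, the generic fiber of each such component-to-component map also has dimension $d$, and additivity gives the identity component-by-component; taking the max recovers the stratum-level statement.

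The main obstacle is the bookkeeping when $Y_d$ or $f_i^{-1}(Y_d)$ is reducible: one must check that some irreducible component of $f_i^{-1}(Y_d)$ dominates each maximal-dimension irreducible component of $Y_d$, so that the maxima really align. This follows from the surjectivity of $f_i|_{f_i^{-1}(Y_d)}$ together with the fact that the image of any irreducible component of the source is irreducible, hence sits inside a single component of $Y_d$; a maximal-dimension component of $Y_d$ must therefore be covered by some dominant component upstairs. With this verified the proof is complete, and the argument uses nothing beyond standard facts about morphisms of algebraic sets.
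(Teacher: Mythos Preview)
Your argument is essentially correct and is the standard one; the paper itself supplies no proof, merely a pointer to Lemma~2.1 of the reference \cite{holonomicity}, so there is nothing to compare at the level of strategy.

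One small technical caveat: Chevalley's theorem gives upper semicontinuity of $x\mapsto\dim_x f_i^{-1}(f_i(x))$ on the \emph{source} $X_i$; on the target $Y$ the function $y\mapsto\dim f_i^{-1}(y)$ is in general only constructible (it is upper semicontinuous if $f_i$ is proper, which is not assumed here). This does not damage your proof: the level sets $Y_d$ are still constructible, hence finite unions of locally closed pieces, and your fiber-dimension identity $\dim f_i^{-1}(Y_d)=\dim Y_d+d$ together with the reducibility bookkeeping you outline goes through verbatim on each piece. Alternatively, one can sidestep the stratification entirely by Noetherian induction on $Y$: pick a dense open $U\subseteq Y$ on which both fiber dimensions are constant (necessarily equal, say to $d$), note $\dim f_i^{-1}(U)=\dim U+d$ for both $i$, and apply the induction hypothesis to $Y\setminus U$.
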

This lemma is used to compare subsets of the cotangent bundle to get the reverse direction in \cref{result}.
\begin{PROP}If $(H,\chi)$ is $\overline{O_P}$-spherical, then $H$ has finitely many orbits on $Y$.\end{PROP}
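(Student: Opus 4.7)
The plan is to invoke \cref{moment} for $S=G/H\times G/P$ and the locally closed $G$-invariant subspace $W\subseteq S$, reducing the proposition to the dimension bound $\dim(f^{-1}(W)\cap\mu_G^{-1}(0))\leq\dim G/H+\dim G/P$. A point of the subvariety on the left is a tuple $((g_1H,\eta_1),(g_2P,\eta_2))$ with $g_1\eta_1+g_2\eta_2=0$ and $(g_1H,g_2P)\in W$, so setting $\zeta:=g_2\eta_2$, we have $\zeta\in\overline{O_P}$ and $\zeta\in(g_1\mathfrak{h}g_1^{-1})^\perp\cap(g_2\mathfrak{p}g_2^{-1})^\perp$. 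Using the $\lambda=\lambda'+\lambda''$ decomposition from the Setup (applied at the parabolic $g_2\mathfrak{p}g_2^{-1}$ after conjugation by $g_1$), the $W$-condition supplies a $\lambda^{*}\in r^{-1}(\chi)\cap(g_1^{-1}g_2\mathfrak{p}g_2^{-1}g_1)^\perp$ witnessing that $\chi$ extends to a functional vanishing on the relevant conjugate of $\mathfrak{p}$.

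Next I enlarge to an auxiliary variety $\tilde Z$ parametrizing such tuples together with a choice of $\lambda^{*}$; this comes with a surjection $\pi:\tilde Z\to f^{-1}(W)\cap\mu_G^{-1}(0)$ whose fiber over a point is the affine space of admissible $\lambda^{*}$. A $G$-invariant morphism $\tilde\Phi:\tilde Z\to r^{-1}(\chi)\cap\overline{O_P}$ is then defined by sending the tuple to $g_1^{-1}\zeta+\lambda^{*}$, which lies in $r^{-1}(\chi)$ (since $(g_1^{-1}\zeta)|_\mathfrak{h}=0$ by the moment-zero condition and $\lambda^{*}|_\mathfrak{h}=\chi$) and in $(g_1^{-1}g_2\mathfrak{p}g_2^{-1}g_1)^\perp\subseteq\overline{O_P}$. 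Stratifying $\overline{O_P}$ into its finitely many $G$-orbits $O$, restricting to $\tilde Z_O:=\tilde\Phi^{-1}(O\cap r^{-1}(\chi))$, and applying \cref{Chevalley} to $\tilde Z_O\to O\cap r^{-1}(\chi)$ gives $\dim\tilde Z_O\leq\dim(O\cap r^{-1}(\chi))+(\text{generic fiber dim})$. The $(H,\chi)$-sphericity hypothesis yields $\dim(O\cap r^{-1}(\chi))\leq\dim O/2$, and the generic fiber at $\beta\in O\cap r^{-1}(\chi)$ decomposes into a free $G/H$-factor (of dimension $\dim G/H$), a Springer-type fiber $\mu_P^{-1}(\beta)\subseteq G/P$ (of dimension at most $\dim G/P-\dim O/2$ by the parabolic Lagrangian bound coming from Lemma 2.2.2 of \cite{AG21}), and the $\lambda^{*}$-affine factor whose dimension precisely cancels against the affine fiber of $\pi$. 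This would yield $\dim(f^{-1}(W)\cap\mu_G^{-1}(0))\leq\dim G/H+\dim G/P$ as required.

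The main obstacle is the dimensional bookkeeping in the last step: the Springer-fiber dimension, the $\lambda^{*}$-affine fiber dimension, and the $\pi$-fiber dimension all vary with the base point through the stabilizer $\mathfrak{h}\cap g_1^{-1}g_2\mathfrak{p}g_2^{-1}g_1$, so a refinement of the orbit stratification into loci of constant fiber dimension is necessary before \cref{Chevalley} applies cleanly. Verifying that the affine contributions from $\pi$ and from the $\lambda^{*}$-factor telescope exactly, so that the surplus above $\dim S$ is zero rather than positive, is the technical heart of the argument and the place where the $W$-condition interacts with the Lagrangian structure of $T^*(G/P)$ in a way not needed in the untwisted setting.
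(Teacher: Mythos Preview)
Your overall architecture matches the paper's: reduce via \cref{moment} to bounding $\dim(f^{-1}(W)\cap\mu_G^{-1}(0))$, decompose over the finitely many nilpotent orbits $O\subseteq\overline{O_P}$, and feed in both the hypothesis $\dim(O\cap r^{-1}(\chi))\leq\tfrac12\dim O$ and the isotropy bound $\dim(O\cap\mathfrak{p}^\perp)\leq\tfrac12\dim O$. Where you diverge is in handling the twist. You introduce an auxiliary space $\tilde Z$ carrying a \emph{varying} witness $\lambda^{*}$ and try to cancel the $\pi$-fibre dimension against the $\lambda^{*}$-affine factor; the paper instead fixes a \emph{single} $\lambda\in r^{-1}(\chi)$ once and for all, defines the twisted set $T_{O,\lambda}=\{((g_1,\xi),(g_2,\eta)):g_1\cdot(\xi-\lambda)=-g_2\cdot\eta\in O\}$, and applies \cref{Chevalley} to the two projections $f^{-1}(W)\cap T_O\to W$ and $T_{O,\lambda}\to W$. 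Over each $(g_1H,g_2P)\in W$ one writes $\lambda=\lambda'+\lambda''\in\mathfrak{h}^\perp+\mathfrak{q}^\perp$ and observes that translation by $\lambda''$ carries one fibre bijectively to the other, so the fibre dimensions agree pointwise and \cref{Chevalley} applies with no further stratification. One then computes $\dim T_{O,\lambda}$ directly by projecting to $O$, which is where $O\cap(\mathfrak{h}^\perp+\lambda)=O\cap r^{-1}(\chi)$ and $O\cap\mathfrak{p}^\perp$ appear.

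The obstacle you flag is genuine for your route: both the $\pi$-fibre and the $\lambda^{*}$-factor have dimension $\dim(\mathfrak{h}^\perp\cap\mathfrak{q}^\perp)$, which depends on $(g_1H,g_2P)$, so a naive max--min estimate loses exactly the surplus you are worried about. Your proposed fix---refining by the locus $W_d$ where this dimension is constant---does work, since over $W_d$ both contributions equal $d$ and cancel; but this double stratification (by $O$ and by $d$) is precisely what the paper's fixed-$\lambda$ translation trick renders unnecessary. One further point: you assume at the outset that $W$ is locally closed, but this is not obvious from the definition. The paper obtains constructibility of $W$ as a byproduct, since $W=f(T_{O,\lambda})$ is the image of a constructible set under a morphism of finite presentation.
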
\begin{proof}We will soon show that $W\subseteq S=G/H\times G/P$ is constructible. Therefore it is a union of locally closed sets, each of which is $G$-invariant. By \cref{moment}, it suffices to show $\dim f^{-1}(W)\cap\mu_G^{-1}(0)\leq\dim S$. We begin by decomposing the fiber according to coadjoint orbits.For $O\subset\overline{O_P}$, we can define $T_O$ as$$\{((g_1,\xi),(g_2,\eta))\in T^*G/H\times T^*G/P\mid g_1\cdot\xi=-g_2\cdot\eta\in O\}.$$The notation here regards the moment map $T^*G/H\rightarrow\mathfrak{g}^*$ equivalently as the function on $G\times_H\mathfrak{h}^\perp$ sending $(g_1,\xi)\mapsto g_1\cdot\xi$, and likewise with $P$. In this way we get a decomposition$$\mu^{-1}(0)=\bigcup_{O\in\overline{O_P}}T_O,$$and there are finitely many such $T_O$ since there are finitely many nilpotent orbits \cite{CM}. So it will suffice to demonstrate from $O$-sphericity that $\dim f^{-1}(W)\cap T_O\leq\dim S$.\par Rather than find the dimension of $T_O$, we do so for a twisted version of $T_O$ that we call $T_{O,\lambda}$, defined similarly as$$\{((g_1,\xi),(g_2,\eta))\in T^*W\subseteq T^*G/H\times T^*G/P\mid g_1\cdot(\xi-\lambda)=-g_2\cdot\eta\in O\}.$$Note first that $f(T_{\lambda,O})=W$. This is because\begin{align*}\chi(\mathfrak{h}\cap g_1^{-1}g_2\mathfrak{p}g_2^{-1}g_1)&=\xi(\mathfrak{h}\cap g_1^{-1}g_2\mathfrak{p}g_2^{-1}g_1)\\&=-g_1^{-1}g_2\cdot\eta(\mathfrak{h}\cap g_1^{-1}g_2\mathfrak{p}g_2^{-1}g_1)=\eta(\mathfrak{p})=0.\end{align*}Since $f$ is finitely presented and $T_{O,\lambda}\subset S$ is constructible, this fulfills the promise that $W$ is constructible \cite{M}. Our plan is to show that the dimension of $T_O$ is the same as that of $T_{O,\lambda}$. Here we use \cref{Chevalley}, applied to the projections $F:f^{-1}(W)\cap T_O\rightarrow W$ and $f:T_{O,\lambda}\rightarrow W$ onto our base space $W$. To apply it, we need only show that the fibers of a point in the image - the intersections with a cotangent space at a fixed $(g_1H,g_2P)$ - have equal dimensions. At this fixed base point, we write $\lambda=\lambda'+\lambda''\in\mathfrak{h}^\perp+(g_1^{-1}g_2\mathfrak{p}g_2^{-1}g_1)^\perp$ as in the conventions set earlier.\par We claim that the addition of $\lambda$ from $\xi$ is an isomorphism of these fibers. Indeed,$((g_1,\xi),(g_2,\eta))\in T_O$ means that $\xi=-g_1^{-1}g_2\cdot\eta\in O$, and the space of such pairs is $\mathfrak{h}^\perp\cap-g_1^{-1}g_2\mathfrak{p}^\perp g_2^{-1}g_1\cap O$. Adding $\lambda''$ to this, we get the space $(\mathfrak{h}^\perp+\lambda'')\cap-g_1^{-1}g_2\mathfrak{p}^\perp g_2^{-1}g_1\cap(O+\lambda'')$, and this is precisely the space of $\xi-\lambda'$ for which$$\mathfrak{h}^\perp\ni(\xi-\lambda')-\lambda''=\xi-\lambda=-g_1^{-1}g_2\cdot\eta\in O.$$So what is left is a careful counting argument to find the dimension of $T_{O,\lambda}$. Because the action of $G$ is diagonal, projection to the second coordinate of $T_{O,\lambda}$ is $G$-equivariant. It is also well-known that the moment map for $G/P$ is $G$-equivariant. Thus we can find dimensions by first fixing a point in $O$ and calculating the dimension of its fiber.\par Let's say that $\alpha\in O$ is fixed. Then the space of $\eta\in\mathfrak{p}^\perp$ for which we can present $\alpha$ as some $g_2\cdot\eta$ is $O\cap\mathfrak{p}^\perp$, since $G/P\cdot\alpha=O$. The arbitrariness of our choice of $g_2\in G/P$ adds another dimension $\dim\textrm{Stab}_{G/P}O=\dim G/P-\dim O$.\par The calculation for the first coordinate is similar. We have $O\cap(\mathfrak{h}^\perp+\lambda)$ ways to choose $\xi$ that allow us to write $\xi-\lambda=g_1^{-1}\cdot\alpha$, and $\dim G/H-\dim O$ degrees of freedom in stabilizing this $\xi-\lambda$. Adding this together, the dimension of our space fibered over $O$ is\begin{align*}\dim T_{O,\lambda}&=\\\dim O+\dim O\cap\mathfrak{p}^\perp&+\dim G/P-\dim O+\dim O\cap(\mathfrak{h}^\perp+\lambda)+\dim G/H-\dim O\\&\leq\frac{1}{2}\dim O+\dim G/P+\frac{1}{2}\dim O+\dim G/H-\dim O\\&=\dim G/H+\dim G/P=\dim S.\end{align*}Here we invoked $O$-sphericity, as well as the fact that $\dim O\cap\mathfrak{p}^\perp$ is an isotropic subspace of $O$ \cite{CG} and so has at most half the dimension.\end{proof}
Now we would like the analogue of the cotangent bundles used in \cite{AG21}. Being cotangent bundles as such was not as important as the existence of a symplectic structure, and something akin to the moment map.
\begin{PROP}[\cite{CG}, Proposition 1.4.14]\label{symplectic}For any $\lambda\in[\mathfrak{h},\mathfrak{h}]^\perp\subset\mathfrak{g}^*$, the space $\mathfrak{h}^\perp+\lambda$ is $H$-invariant. Moreover, there is a natural $G$-invariant symplectic form $\omega$ on $G\times_H\left(\mathfrak{h}^\perp+\lambda\right)$ given by\begin{enumerate}\item$\omega(\alpha,\beta)=0$ for $\alpha,\beta\in\mathfrak{h}^\perp$ tangent to the fibers of the projection $G\times_H(\mathfrak{h}^\perp+\lambda)\rightarrow G/H$.\item$\omega(v_x,v_y)\mid_\lambda=\lambda(g[x,y]g^{-1}])$ for any point $(g,\zeta)$ and vector fields $v$ coming from $x,y\in\mathfrak{g}$.\item$\omega(\alpha,v_x)=\alpha(gxg^{-1})$ at all $\alpha\in\mathfrak{h}^\perp$ tangent to the fiber at $gH\times(\mathfrak{h}^\perp+\lambda)$.\end{enumerate}\end{PROP}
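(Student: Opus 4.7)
The plan is to realize $G \times_H (\mathfrak{h}^\perp + \lambda)$ as a Marsden--Weinstein reduction of $T^*G$ by the right action of $H$. In that framework, both the symplectic form and the three explicit formulas drop out of the reduction procedure, and the hypothesis $\lambda \in [\mathfrak{h}, \mathfrak{h}]^\perp$ is exactly the compatibility condition needed for the reduction to be well-defined.

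First I would dispatch the $H$-invariance of $\mathfrak{h}^\perp + \lambda$ infinitesimally. For $X \in \mathfrak{h}$ and $\xi = \xi_0 + \lambda$, the coadjoint derivative splits as $\mathrm{ad}^*(X)\xi_0 + \mathrm{ad}^*(X)\lambda$: the first summand stays in $\mathfrak{h}^\perp$ since that subspace is already $H$-stable, and the second lies in $\mathfrak{h}^\perp$ precisely when $\lambda$ annihilates $[\mathfrak{h}, \mathfrak{h}]$, which is exactly the hypothesis. Connectedness of $H$ then upgrades this to genuine group invariance.

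For the symplectic form, I would left-trivialize $T^*G \cong G \times \mathfrak{g}^*$, so that the right $H$-action becomes $(g, \xi) \cdot h = (gh, \mathrm{Ad}^*(h^{-1})\xi)$, with moment map $\mu_H(g, \xi) = \xi|_{\mathfrak{h}}$. The hypothesis says that $\lambda|_\mathfrak{h}$ is a coadjoint-fixed character of $\mathfrak{h}$, so $\mu_H^{-1}(\lambda|_\mathfrak{h}) = G \times (\mathfrak{h}^\perp + \lambda)$ is $H$-stable and its geometric quotient is $G \times_H (\mathfrak{h}^\perp + \lambda)$. By Marsden--Weinstein reduction, this inherits a symplectic form $\omega$ from the canonical form on $T^*G$; and since the left $G$-action on $T^*G$ commutes with the right $H$-action and preserves the canonical one-form, $\omega$ is automatically $G$-invariant.

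Finally I would verify the three formulas by direct computation in the left trivialization. Any tangent vector at a reduced point $(g, \zeta)$ decomposes as a \emph{vertical} piece in $\mathfrak{h}^\perp$ (varying $\zeta$ along its affine fiber) and a \emph{horizontal} piece given by a fundamental vector field $v_x$ for $x \in \mathfrak{g}$ from the left action. Property (1) then expresses that the affine fibers sit inside the vertical Lagrangian distribution of $T^*G$; properties (2) and (3) follow from pairing the canonical one-form $\theta_{(g,\xi)}(v) = \xi(g^{-1}dg(v))$ against these tangent vectors and applying $d\theta$, together with the bracket identity $[v_x, v_y] = v_{[x,y]}$. The main obstacle will be the bookkeeping of signs and trivialization conventions in this last step, particularly in checking that horizontal vectors descend unambiguously to the quotient at points where $\lambda \neq 0$, so that $\omega$ is a genuine twist of the Kirillov--Kostant--Souriau structure rather than the standard cotangent symplectic form.
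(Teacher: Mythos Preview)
The paper does not supply its own proof of this proposition; it is quoted from \cite{CG}, Proposition~1.4.14, and used as a black box in the subsequent argument. Your Marsden--Weinstein reduction of $T^*G$ by the right $H$-action at the value $\lambda|_{\mathfrak{h}}$ is correct and is precisely the construction carried out in \cite{CG}, so your proposal matches the cited source even though there is nothing in the present paper to compare it against.
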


Let us now complete the proof of \cref{result}.
\begin{PROP}If $H$ has finitely many orbits on $Y$, then $(H,\chi)$ is $\overline{O_P}$-spherical.\end{PROP}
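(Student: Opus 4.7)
The plan is by contraposition: suppose $(H,\chi)$ fails $O$-sphericity for some $O \subseteq \overline{O_P}$, i.e.\ $\dim(O \cap r^{-1}(\chi)) > \tfrac{1}{2}\dim O$, and deduce that $H$ has infinitely many orbits on $Y$. The forward direction has already done most of the dimension bookkeeping I will invert: it constructed $T_{O,\lambda} \subseteq T^*S$, used \cref{Chevalley} to identify $\dim(f^{-1}(W) \cap T_O) = \dim T_{O,\lambda}$, and computed
\[ \dim T_{O,\lambda} \;=\; \dim S + \dim(O \cap \mathfrak{p}^\perp) + \dim(O \cap r^{-1}(\chi)) - \dim O. \]
Since $T_O \subseteq \mu_G^{-1}(0)$, applying \cref{moment} contrapositively will furnish infinitely many $G$-orbits on $W$, and hence infinitely many $H$-orbits on $Y$, as soon as $\dim T_{O,\lambda} > \dim S$.

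Given the formula, the strict inequality $\dim T_{O,\lambda} > \dim S$ reduces to $\dim(O \cap \mathfrak{p}^\perp) + \dim(O \cap r^{-1}(\chi)) > \dim O$. The hypothesis supplies $\dim(O \cap r^{-1}(\chi)) > \tfrac{1}{2}\dim O$, so I would close the gap by establishing $\dim(O \cap \mathfrak{p}^\perp) \geq \tfrac{1}{2}\dim O$; combined with the isotropy bound from \cite{CG} used in the forward direction, this is in fact an equality. Thus the argument reduces to the equidimensionality $\dim(O \cap \mathfrak{p}^\perp) = \tfrac{1}{2}\dim O$ for every $O \subseteq \overline{O_P}$.

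This equidimensionality is the main obstacle. For the Richardson orbit $O = O_P$ it is immediate: $O_P \cap \mathfrak{p}^\perp$ is open and dense in $\mathfrak{p}^\perp$ by definition, hence of dimension $\dim G/P = \tfrac{1}{2}\dim O_P$. For sub-orbits $O \subsetneq O_P$, which is where the real work lies, one invokes Spaltenstein's theorem on the fibers of the Springer-type map $G \times_P \mathfrak{p}^\perp \twoheadrightarrow \overline{O_P}$; alternatively, this may be repackaged through \cref{symplectic} applied to $P$ in place of $H$ (with $\lambda = 0$), realizing $T^*(G/P)$ as a symplectic resolution of $\overline{O_P}$ whose Lagrangian fibers over each coadjoint orbit enforce the required dimension count. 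Once this equidimensionality is granted, the displayed formula yields $\dim T_{O,\lambda} > \dim S$, and the contrapositive, and hence the proposition, is complete.
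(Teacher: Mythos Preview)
Your contrapositive argument is valid but takes a genuinely different route from the paper. The paper does not invert the dimension count from the forward direction; instead it argues directly. It recasts $T_{O,\lambda}$ inside the symplectic manifold $(G\times_H(\mathfrak{h}^\perp+\lambda))\times T^*G/P$ furnished by \cref{symplectic}, decomposes $T_{O,\lambda}=\bigcup_C T_C$ over the finitely many $G$-orbits $C\subseteq W$ (this is where the hypothesis enters), and verifies by an explicit calculation with the form $\omega_1\oplus\omega_2$ that each $T_C$ is isotropic. Pushing forward along an equivariant symplectic submersion $(I,\nu)$ to the graph $\Gamma_{O,\lambda}\subset(G\times_H(\mathfrak{h}^\perp+\lambda))\times O$ then forces $\dim(O\cap r^{-1}(\chi))+\dim G/H\le\dim G/H+\tfrac12\dim O$, which is $O$-sphericity.

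The trade-off is this. Your argument is short once the equality $\dim(O\cap\mathfrak{p}^\perp)=\tfrac12\dim O$ for every $O\subseteq\overline{O_P}$ is granted, and you rightly flag that equality as the crux. It is a genuine theorem---the Steinberg--Spaltenstein dimension formula for (partial) Springer fibers---and your alternative gloss via \cref{symplectic} with $\lambda=0$ does not by itself supply the needed lower bound: isotropy of moment-map fibers only gives $\le$, and ``Lagrangian'' is exactly what has to be proved. The paper's route, by contrast, never invokes that equality: it uses only the isotropy \emph{upper} bound on the $\mathfrak{p}^\perp$ side (already cited from \cite{CG} in the forward direction), and manufactures the matching bound on the $\mathfrak{h}^\perp+\lambda$ side out of the finiteness hypothesis itself, via the isotropy of $T_{O,\lambda}$. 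So your proof is more economical but imports a nontrivial external fact about nilpotent orbits, whereas the paper's is self-contained within the symplectic framework it has already built.
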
\begin{proof}Let $O\subseteq\overline{O_P}$. In light of \cref{symplectic}, we give a different presentation of $T_{O,\lambda}$ to take advantage of a symplectic structure: it will now be considered as the submanifold$$\left\{((g_1,\zeta),(g_2,\eta))\in(G\times_H(\mathfrak{h}^\perp+\lambda))\times T^*G/P\mid g_1\cdot\zeta=-g_2\cdot\eta\in O\right\}.$$When we restrict the map $(g_1,\zeta)\mapsto g_1\cdot\zeta$ to have its range in $O$, the graph will be$$\Gamma_{O,\lambda}=\{((g_1,\zeta),g_1\cdot\zeta)\}\subset(G\times_H(\mathfrak{h}^\perp+\lambda))\times O.$$This graph has an equivariant projection to $G/H$ with fiber $O\cap(\mathfrak{h}^\perp+\lambda)$ at $1$, making its dimension $\dim O\cap(\mathfrak{h}^\perp+\lambda)$.\par Consider the map $(I,\nu):T_{O,\lambda}\rightarrow\Gamma_{O,\lambda}$ , where $\nu$ is the moment map $T^*G/P\rightarrow\mathfrak{g}^*$. Note that $\nu$ is $G$-equivariant, so because its image on $\nu^{-1}(O)$ is a single orbit, it is surjective onto $O$. Any point in $\Gamma_{O,\lambda}$ can therefore be found in the image of some point in $T_{O,\lambda}$ under $(I,\nu)$. So $(I,\nu)$ is surjective and equivariant, hence a submersion, and moreover its codifferentials preserve the symplectic form on $(G\times_H(\mathfrak{h}^\perp+\lambda))\times G/P$ by Proposition 2.2.3 of \cite{AG21}. So our strategy will be to show that the $G$-space $T_{O,\lambda}$ is isotropic, for this would then prove that the graph $\Gamma_{O,\lambda}$ is also isotropic and give the dimension formula\begin{multline*}\dim O\cap(\mathfrak{h}^\perp+\lambda)+\dim G/H=\dim\Gamma_{O,\lambda}\leq\frac{1}{2}\dim(G\times_H(\mathfrak{h}^\perp+\lambda))\times O\\=\frac{1}{2}(\dim G-\dim H+\dim\mathfrak{h}^\perp+\dim O)=\dim G/H+\frac{1}{2}\dim O.\end{multline*}This is $O$-sphericity.\par To show that $T_{O,\lambda}$ is isotropic, define, for a $G$-orbit $C\subseteq W$, the subspace $T_C=f^{-1}(C)$ (here again $f$ is projection to the base). Since there are finitely many orbits, $T_{O,\lambda}$ would then be a finite union of such $T_C$'s, and it therefore suffices to show that such a $T_C$ is isotropic.\par Let $((g_1,\zeta),(g_2,\eta))\in T_C$. Then the tangent space to the homogeneous space $C$ is spanned by the vectors that come from the diagonal infinitesimal action of $G$. At a covector with basepoint $c$, if this action yields the vector $v_x$, then the tangent vectors of $T_C$  are of the form $(v_x+\alpha,v_x+\beta)$ with $\alpha,\beta$ vertical i.e.\ tangent solely to $f^{-1}(c)$. The condition $g_1\cdot\zeta=-g_2\cdot\eta$ differentiates with respect to $\zeta,\eta$ to give $g_1\cdot\alpha=-g_2\cdot\beta\ \forall\alpha,\beta$. Now taking the symplectic form $\omega_1$ of $G\times_H(\mathfrak{h}^\perp+\lambda)$ together with the symplectic form $\omega_2$ of $T^*G/P$ for two vectors tangent to our submanifold, we see\begin{align*}\omega_1(v_{x_1}+\alpha_1,v_{x_2}+\alpha_2)+\omega_2(v_{x_1}+\beta_1,v_{x_2}+\beta_2)&\\=\zeta(g_1[x_1,x_2]g_1^{-1})+\eta(g_2[x_1,x_2]g_2^{-1})&+\\\alpha_2(g_1x_1g_1^{-1})+\beta_2(g_2x_2g_2^{-1})-\alpha_1(g_1x_2g_1^{-1})&-\beta_1(g_2x_2g_2^{-1})\\=(g_1\cdot\zeta)([x_1,x_2])+(g_2\cdot\eta)([x_1,x_2])&+\\(g_1\cdot\alpha_2)(x_1)+(g_2\cdot\beta_2)(x_2)&-\\\left((g_1\cdot\alpha_1)(x_2)+(g_2\cdot\beta_1)(x_2)\right)\\=0-0+0.\end{align*}\end{proof}

\section*{Acknowledgments}
This is an abridgement of a thesis submitted for the degree of Master of Science at the Weizmann Institute of Science. Having the good fortune to be advised by Dima Gourevitch has been heartening and inspiring. His guidance was uniquely well-mannered for patience, support, respect, and openness.

\bibliographystyle{plain}
\bibliography{Biblio}

\end{document}